\documentclass[a4paper,11pt]{article}
\usepackage{microtype}
\usepackage{graphicx}
%======================
% The amssymb package provides various useful mathematical symbols
% The mathtools package provides extra useful mathematical symbols and improves AMS packages
\usepackage{latexsym}
\usepackage{amsmath, amsthm, amssymb, amsfonts, mathtools}
\usepackage{MnSymbol}
\usepackage{rotating}
%=========================
% Define environments for plain style
\newtheorem{theorem}{Theorem}
\newtheorem{corollary}{Corollary}
\newtheorem{lemma}{Lemma}

% Set the environment style to definition
\theoremstyle{definition}
% Define environments for definition style

%\newtheorem{ascertation}{Ascertation}
%\newtheorem{claim}{Claim}

%\newtheorem{comment}{Comment}

%================================
%use hyperref package to hide ugly link boxes, show coloured links and create affiliations footnotes
\usepackage[hidelinks]{hyperref}
 \hypersetup{
  colorlinks  = true, %Colours links instead of ugly boxes
  urlcolor    = blue, %Colour for external hyperlinks
  linkcolor   = blue, %Colour of internal links
  citecolor   = red   %Colour of citations
}
\urlstyle{sf} %sans serif font for typing url addresses
%=================
\usepackage{float}
\usepackage{subfloat}
% Load the packages to support subfigures
%\usepackage{caption,subfig}
\usepackage[config, labelfont={sf,bf},textfont=sf]{caption,subfig}  %Have multiple images side by side, each individually captioned.
\usepackage{keyval2e}  %subfig requires this package
\usepackage{everysel,ragged2e} %optional packages for subfig for justification options
%================
%setup process to place affiliations as a title page footnote
\newcommand{\footremember}[2]{%
\footnote{#2}
\newcounter{#1}
\setcounter{#1}{\value{footnote}}%
}

%end of affiliations process

% \newcommand{\qed}{\begin{flushright} $\Box$ \end{flushright}}

\title{The Propus Construction for Symmetric Hadamard Matrices}

%list authors and affiliations

\author{Jennifer Seberry \footremember{UoW}{Centre for Computer Security Research, School of Computer Science and Software Engineering, EIS, University of Wollongong, NSW 2522, Australia. Email: \url{jennifer_seberry@uow.edu.au}} and N. A. Balonin \footremember{Nick}{Saint Petersburg State University of Aerospace Instrumentation,
67, B. Morskaia St., 190000, St. Petersburg, Russian Federation. Email: \url{korbendfs@mail.ru}}}

\begin{document}
\date{29 November 2015}
%\begin{frontmatter}

\maketitle

\begin{abstract}
\textit{Propus} (which means twins) is a construction method for orthogonal $\pm 1$ matrices based on a variation of the Williamson array called the \textit{propus array}

\[ \begin{matrix*}[r]
   A&  B  &  B  & D \\
   B&  D  & -A  &-B \\
   B& -A  & -D  & B \\
   D& -B  &  B  &-A. 
   \end{matrix*} \]

This construction designed to find symmetric Hadamard matrices was originally based on circulant symmetric $\pm 1$ matrices, called \textit{propus matrices}. We also give another construction based on symmetric Williamson-type matrices.

We give  constructions  to find symmetric propus-Hadamard matrices for 57  orders $4n$, $n < 200$ odd. 

We give variations of the above array to allow for more general matrices than symmetric Williamson propus matrices. One such is  the \textit{ Generalized Propus Array (GP)}. 

\end{abstract}

Keywords: Hadamard Matrices, $D$-optimal designs,
conference matrices, propus construction, Williamson matrices; Cretan matrices; 05B20.
%\end{frontmatter}

\section{Introduction}
\label{sec:Introduction} Hadamard matrices arise in statistics, signal processing, masking, compression, combinatorics, weaving, spectroscopy and other areas. They been studied extensively. Hadamard showed \cite{JH1893} the order of an Hadamard matrix must be 1, 2 or a multiple of $4$. Many constructions for $\pm 1$ matrices and similar matrices such as Hadamard matrices, weighing matrices, conference matrices and $D$-optimal designs use skew and symmetric Hadamard matrices in their construction. For more details see Seberry and Yamada \cite{SY92}.

An Hadamard matrix of order $n$ is an $n \times n$ matrix with elements $\pm 1$ such
that $H H^\top = H^\top H = n I_n$, where $I_n$ is the $n \times n$
identity matrix and $\top$ stands for transposition. A skew Hadamard matrix $H=I+S$ has $S^{\top } = -S$. For more details see the books and surveys of Jennifer Seberry (Wallis) and others \cite{SY92, WWASJW1972} cited in the bibliography.

Theorems of the type \textit{for every odd integer $n$ there exists a $t$ dependent on $n$ so that Hadamard, regular Hadamard, co-cyclic Hadamard and some full orthogonal designs exist for all orders $2^t n$, $t$ integer} are known \cite{JSW76,craigen95,chaderpourkharaghani14,deLKhar09}.  A similar result for symmetric Hadamard and skew-Hadamard matrices has not yet been published but is conjectured.

Propus is a construction method for orthogonal $\pm 1$ matrices, $A$, $B=C$, and $D$, where \[AA^{\top} + 2BB^{\top} +  DD^{\top} = \text{~constant~} I,\] 
 $I$ the identity matrix,  based on the array
\[ \begin{matrix*}[r]
   A&  B  &  B  & D \\
   B&  D  & -A  &-B \\
   B& -A  & -D  & B \\
   D& -B  &  B  &-A. 
   \end{matrix*} \]

This construction, based on circulant symmetric $\pm 1$ matrices, called \textit{propus matrices}, gives symmetric Hadamard matrices. It also gives aesthetically pleasing visual images (pictures) when converted using MATLAB (we show some below).

We give methods to find propus-Hadamard matrices: using Williamson matrices and $D$-optimal designs. These are then generalized to allow non-circulant and/or non-symmetric matrices with the same aim 
to give symmetric Hadamard matrices.

We show that for
\begin{itemize}
\item $q \equiv  1 \pmod{4}$, a prime power, such matrices exist for order $t = \frac12(q+1)$, and
      thus propus-Hadamard matrices of order $2(q+1)$;
\item $t \equiv3 \pmod{4}$, a prime, such that $D$-optimal designs, constructed using two circulant
      matrices, one of which must be circulant and symmetric, exist of order $2t$, then such propus-Hadamard matrices exist for order $4t$.
\item $4-\{t;s_1,s_2,s_3,s_4;\frac{\sum_{i=1}^4 s_i(s_i -1)}{t-1}\}$ sds, $4t=a^2+b^2+c^2+d^2, a \equiv b = c \equiv d \equiv t \pmod{4},~a = 2s_1 -t, b= 2s_2 -t,~c=2s_3 -t,~d=2s_4 -t$, where one of the 
      supplementary difference sets is symmetric then such propus-Hadamard matrices exist for order $4t$.
\item symmetric variant propus matrices \cite{SY92} may be used to find symmetric propus-Hadamard
      matrices for orders described below (see Corollary \ref{cor:impt}). 
      \end{itemize} 
     
We note that appropriate \textit{Williamson type} matrices may also be used to give propus-Hadamard matrices but do not pursue this avenue in this paper. There is also the possibility that this propus construction may lead to some insight into the existence or non-existence of symmetric conference matrices for some orders.
We refer the interested reader to \url{mathscinet.ru/catalogue/propus/}.

\subsection{Definitions and Basics}
\indent Two matrices $X$ and $Y$ of order $n$ are said to be \textit{amicable} if $XY^\top = YX^\top$.

We define the following classes of propus like matrices. We note that there are slight variations in the matrices which allow variant arrays and non-circulant matrices to be used to give symmetric Hadamard matrices, All propus like matrices $A$, $B=C$, $D$ are $\pm 1$ matrices of order $n$  satisfy the \textit{additive property} 
\begin{equation}\label{additive}
                AA^{\top} + 2BB^{\top}  + DD^{\top} = 4nI_n ,
\end{equation} 
$I$ the identity matrix, $J$ the matrix of all ones. 

We make the following definitions: 
\begin{itemize}
\item \textit{propus matrices}: four circulant symmetric $\pm 1$ matrices, $A$, $B$, $B$, $D$ of order $n$, satisfying the additive property (use $P$);
\item \textit{propus-type matrices}: four pairwise amicable $\pm 1$ matrices, $A$, $B$, $B$, $D$ of order $n$, $A^{\top } = A$, satisfying the additive property (use $P$);
\item \textit{generalized-propus matrices}: four pairwise commutative $\pm 1$ matrices, $A$, $B$, $B$, $D$ of order $n$, $A^{\top } = A$, which satisfy the additive property (use $GP$).
\end{itemize}

 We use two types of arrays into which to plug the propus like matrices:
 the Propus array, $P$, or the generalized-propus array, $GP$. 
 These can also be used  with generalized matrices (\cite{JSW1974}).
  \begin{center}
 \begin{tabular}{ccc}  
$ P =  \begin{bmatrix*}[r]
    A&  B  &  B  & D \\
    B&  D  & -A  &-B \\
    B& -A  & -D  & B \\
    D& -B  &  B  &-A 
    \end{bmatrix*} $ & and & 
 $ GP =  \begin{bmatrix*}[r]
     A&  BR  &  BR  & DR \\
     BR&  D^\top R  & -A  &-B^\top R \\
     BR& -A  & -D^\top R  & B^\top R \\
     DR& -B^\top R  &  B^\top R  &-A. 
     \end{bmatrix*}.  $ 
     \end{tabular} \end{center}
     
Symmetric Hadamard matrices made using propus like matrices will be called \textit{symmetric propus-Hadamard matrices}.    

\section{Symmetric Propus-Hadamard Matrices}
We first give the explicit statements of two well known theorem, Paley's Theorem \cite{RP1933}, for the Legendre core $Q$, and Turyn's Theorem \cite{RT1972}, in the form in which we will use them.

\begin{theorem} \label{th:paley} {\rm \textbf{[Paley's Legendre Core \cite{RP1933}]}}
Let  $p$ be a prime power, either $\equiv 1 \pmod{4}$ or $\equiv 3 \pmod{4}$ then there exists a matrix, $Q$, of order $p$ with zero diagonal and other elements ${\pm 1}$
satisfying $QQ^{\top } = (q+1)I - J$, $Q$ is symmetric or skew-symmetric according as $p \equiv 1 \pmod{4}$ or $p \equiv 3 \pmod{4}$. 
\end{theorem}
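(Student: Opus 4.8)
The plan is to exhibit $Q$ as the Jacobsthal (quadratic-residue) matrix over the finite field $\mathbb{F}_p$ and to verify the three asserted properties by elementary character-sum computations. Throughout write $q=p$ and let $\chi:\mathbb{F}_q\to\{-1,0,1\}$ be the quadratic character: $\chi(0)=0$, $\chi(x)=1$ if $x$ is a nonzero square, and $\chi(x)=-1$ otherwise. Since $\mathbb{F}_q^{\times}$ is cyclic of order $q-1$, the map $\chi$ is a multiplicative homomorphism onto $\{\pm1\}$ whose kernel is the set of squares; in particular $\chi(xy)=\chi(x)\chi(y)$ for all $x,y\in\mathbb{F}_q$, and $\sum_{x\in\mathbb{F}_q}\chi(x)=0$ because the nonzero squares and the non-squares are equinumerous. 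Fix an ordering of $\mathbb{F}_q$ and define $Q$ by $Q_{a,b}=\chi(a-b)$.

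The first two claims are immediate from the definition of $\chi$: the diagonal entries are $Q_{a,a}=\chi(0)=0$, while for $a\neq b$ we have $a-b\in\mathbb{F}_q^{\times}$, so $Q_{a,b}\in\{\pm1\}$. For the symmetry type, recall that $-1$ is the unique element of order $2$ in the cyclic group $\mathbb{F}_q^{\times}$, hence a square exactly when $4\mid q-1$; equivalently $\chi(-1)=(-1)^{(q-1)/2}$, which is $+1$ when $q\equiv1\pmod4$ and $-1$ when $q\equiv3\pmod4$. Consequently $Q_{b,a}=\chi(b-a)=\chi(-1)\chi(a-b)=\chi(-1)\,Q_{a,b}$, so $Q^{\top}=Q$ in the first case and $Q^{\top}=-Q$ in the second.

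It remains to compute the Gram matrix entrywise, $(QQ^{\top})_{a,c}=\sum_{b\in\mathbb{F}_q}\chi(a-b)\chi(c-b)$. If $a=c$, every term with $b\neq a$ equals $\chi(a-b)^{2}=1$, giving $q-1$. If $a\neq c$, the terms $b=a$ and $b=c$ vanish, and for the remaining $b$ we write $\chi(a-b)\chi(c-b)=\chi\big((a-b)(c-b)\big)=\chi\big((a-b)^{2}\big)\,\chi\big((c-b)/(a-b)\big)=\chi\big((c-b)/(a-b)\big)$. As $b$ runs over $\mathbb{F}_q\setminus\{a,c\}$, the fractional-linear substitution $t=(c-b)/(a-b)$ is a bijection onto $\mathbb{F}_q\setminus\{0,1\}$ (it is a Möbius transformation sending $c\mapsto0$, $a\mapsto\infty$, $\infty\mapsto1$), so the sum equals $\sum_{t\neq0,1}\chi(t)=\big(\sum_{t\in\mathbb{F}_q}\chi(t)\big)-\chi(1)=-1$. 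Hence $(QQ^{\top})_{a,c}=q-1$ when $a=c$ and $-1$ otherwise, i.e. $QQ^{\top}=qI-J$ with $q=p$, which is the claimed identity.

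I expect the only non-routine step to be this final Gram computation, specifically recognizing $b\mapsto(c-b)/(a-b)$ as a bijection of the off-diagonal index set onto $\mathbb{F}_q\setminus\{0,1\}$ and then invoking the vanishing of the full character sum $\sum_{x}\chi(x)=0$. The zero diagonal, the entrywise $\pm1$ property, and the symmetric/skew dichotomy all follow directly from the defining multiplicative properties of $\chi$ and the value of $\chi(-1)$.
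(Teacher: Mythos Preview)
Your argument is the standard Jacobsthal/quadratic-character construction and is correct. Note, however, that the paper does not actually supply a proof of this theorem: it is quoted as a classical result with a citation to Paley, so there is no ``paper's own proof'' to compare against. Your write-up is exactly the argument one finds in standard references.

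One small point worth flagging: the identity you derive is $QQ^{\top}=pI-J$ (equivalently $qI-J$ in your notation $q=p$), whereas the theorem as stated in the paper reads $QQ^{\top}=(q+1)I-J$. The latter appears to be a typographical slip in the paper (the variable $q$ is not separately defined there, and the correct Gram relation for the Paley core of order $p$ is $pI-J$). Your computation is the right one; just be aware that it does not literally match the displayed formula.
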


\begin{theorem} \label{th:turyn} {\rm \textbf{[Turyn's Theorem \cite{RT1972}]}}
Let $q \equiv 1 \pmod {4}$ be a prime power then there are two symmetric  matrices, $P$ and $S$ of order $\frac12(q+1)$, satisfying $PP^{\top }+ SS^{\top } = qI$: $P$ has zero diagonal and other elements ${\pm 1}$ and $S$ elements ${\pm 1}$.
\end{theorem}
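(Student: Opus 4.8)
The plan is to build Turyn's pair $(P,S)$ by ``folding in half'' the symmetric Paley conference matrix of order $q+1$, using a coordinate-reversing involution of the projective line in the role of a complex structure. First, let $Q$ be the Legendre core of order $q$ supplied by Theorem~\ref{th:paley}; since $q\equiv 1\pmod 4$, $Q$ is symmetric, so
\[ C=\begin{pmatrix} 0 & \mathbf 1^{\top}\\ \mathbf 1 & Q\end{pmatrix} \]
is a symmetric conference matrix of order $q+1$: $C^{\top}=C$, $C$ has zero diagonal and $\pm1$ off-diagonal entries, and $CC^{\top}=C^{2}=qI_{q+1}$. I index its rows and columns by the projective line $GF(q)\cup\{\infty\}$, with $C_{x,y}=\chi(x-y)$ for $x,y\in GF(q)$ and $C_{\infty,y}=C_{x,\infty}=1$, where $\chi$ is the quadratic character.

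The core of the argument is to produce a $\pm1$ signed permutation matrix $M$ of order $q+1$ with $M^{\top}=-M$, $M^{2}=-I_{q+1}$ and $MC=-CM$. To this end, fix a non-square $\mu\in GF(q)^{*}$ and let $g$ be the fractional-linear involution $z\mapsto\mu/z$ of the projective line (with $g(0)=\infty$); it is fixed-point-free, since $z^{2}=\mu$ has no solution, and it partitions the $q+1$ points into $n:=\tfrac12(q+1)$ pairs. Put $E=\mathrm{diag}(\varepsilon_{z})$ with $\varepsilon_{\infty}=1$, $\varepsilon_{0}=-1$, $\varepsilon_{z}=\chi(z)$ for $z\in GF(q)^{*}$, and set $M=EP_{g}$, with $P_{g}$ the permutation matrix of $g$. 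A short computation using $\chi(\mu/z)=\chi(\mu)\chi(z)^{-1}=-\chi(z)$ and $\chi(-1)=1$ — this is the only place where $q\equiv1\pmod4$ is used essentially, beyond the symmetry of $Q$ — shows $C_{g(x),g(y)}=-\varepsilon_{x}\varepsilon_{y}C_{x,y}$ for all $x,y$, i.e. $P_{g}CP_{g}=-ECE$; hence $MCM^{\top}=E(P_{g}CP_{g})E=-C$, that is $MC=-CM$. Moreover $\varepsilon_{z}\varepsilon_{g(z)}=\chi(\mu)=-1$ on each $g$-pair, so $M$ restricted to the two coordinates of such a pair is $\left(\begin{smallmatrix}0&1\\-1&0\end{smallmatrix}\right)$ up to a sign, whence $M^{\top}=-M$ and $M^{2}=-I$.

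The rest is formal linear algebra. Conjugating by a suitable signed permutation matrix (reorder the index set so that the $g$-pairs become $\{k,k+n\}$, and flip signs within pairs) leaves $C$ a symmetric conference matrix with zero diagonal, and brings $M$ to the standard form $\left(\begin{smallmatrix}0&I_{n}\\-I_{n}&0\end{smallmatrix}\right)$. Writing $C=\left(\begin{smallmatrix}A&B\\ B^{\top}&D\end{smallmatrix}\right)$ with $A=A^{\top}$, $D=D^{\top}$ of order $n$, the identity $CM=-MC$ forces $B=B^{\top}$ and $D=-A$, so $C=\left(\begin{smallmatrix}A&B\\ B&-A\end{smallmatrix}\right)$. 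Setting $P=A$ and $S=B$: both are symmetric of order $n=\tfrac12(q+1)$, $P$ inherits the zero diagonal and $\pm1$ off-diagonal entries of $C$ and $S$ has all entries $\pm1$; reading off the $(1,1)$ block of $C^{2}=qI$ gives $P^{2}+S^{2}=qI$, that is $PP^{\top}+SS^{\top}=qI$, as required. (The off-diagonal blocks give the bonus identity $PS=SP$, useful for the propus construction although not asserted here.)

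The hard part is the middle step: constructing $M$ and, in particular, pinning down the signs at the two special points $0$ and $\infty$ so that $M$ is simultaneously a signed permutation, skew-symmetric, a square root of $-I_{q+1}$, and an anticommuting partner of $C$; once $M$ exists with those four properties the conclusion drops out of the block decomposition with no further work. One could also simply invoke Turyn's original argument \cite{RT1972}, or the explicit field construction of $P$ and $S$ recorded in \cite{SY92}; the outline above is essentially a coordinate-free repackaging of the latter.
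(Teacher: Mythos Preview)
The paper does not prove Theorem~\ref{th:turyn}: it is quoted as Turyn's result and simply cited to \cite{RT1972}, with no argument supplied. So there is nothing in the paper to compare your proof against.

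That said, your proof is correct and self-contained. The verification that $C_{g(x),g(y)}=-\varepsilon_x\varepsilon_y C_{x,y}$ goes through in every case (including the delicate ones involving $0$ and $\infty$), and from $M^{\top}=-M$, $M^2=-I$, $MC=-CM$ the block decomposition $C=\left(\begin{smallmatrix}A&B\\ B&-A\end{smallmatrix}\right)$ and the identity $A^2+B^2=qI$ follow exactly as you say. Your approach---folding the Paley conference matrix along a fixed-point-free involution of $\mathrm{PG}(1,q)$---is essentially a coordinate-free recasting of the Delsarte--Goethals--Seidel/Turyn construction; Turyn's original argument in \cite{RT1972} works instead with explicit cyclotomic formulae in $GF(q)$ to write down $P$ and $S$ directly. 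Your route has the advantage of making the commutativity $PS=SP$ (which the propus construction needs) fall out for free from the off-diagonal block of $C^2=qI$, whereas in the explicit-formula approach that has to be checked separately.
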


\subsection{Propus-Hadamard Matrices from Williamson Matrices}
\begin{lemma} \label{lem:basic} Let $q \equiv 1 \pmod{4}$, be a prime power, then propus matrices exist for orders 
$n = \frac{1}{2}(q+1)$ which give symmetric propus-Hadamard matrices of order $2(q+1)$. 
 \end{lemma}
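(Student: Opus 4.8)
The plan is to build the four circulant symmetric $\pm1$ matrices $A$, $B=C$, $D$ of order $n=\tfrac12(q+1)$ directly from Turyn's Theorem (Theorem~\ref{th:turyn}) and then verify that they satisfy the additive property~\eqref{additive}, so that plugging them into the propus array $P$ yields a symmetric Hadamard matrix of order $4n=2(q+1)$. Turyn gives two symmetric matrices $P$ and $S$ of order $n$ with $PP^\top+SS^\top=qI$, where $P$ has zero diagonal and entries $\pm1$ off the diagonal and $S$ has all entries $\pm1$; moreover these are built from the Legendre core of Theorem~\ref{th:paley}, hence they are circulant. First I would set $A:=P+I$ (or $P-I$), which turns the zero diagonal into $\pm1$ and keeps the matrix circulant and symmetric; then I would take $B=C:=S$ and $D:=S$ (or some sign variant such as $D:=-S$), which are already circulant symmetric $\pm1$ matrices. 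The point is that these choices are forced, up to signs, by the structure Turyn provides.

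The key computation is to check the additive property. Using $A=P+I$ with $P$ symmetric and $PI=IP$, one gets $AA^\top = PP^\top + 2P + I$. With $B=C=D=S$ one has $2BB^\top+DD^\top = 3SS^\top$, which does not immediately combine with $PP^\top+SS^\top=qI$; so I would instead look for the correct multiplicities. The natural bookkeeping is: the propus array needs $AA^\top+2BB^\top+DD^\top=4nI=2(q+1)I$. Writing $A=P\pm I$ contributes $PP^\top+I\pm 2P$, and the off-diagonal term $\pm 2P$ must cancel — this is where symmetry of $P$ and the freedom to also adjust $D$ (e.g. $D=P\mp I$ paired against $B=C=S$) comes in. Concretely I expect the right assignment to be two copies built from $S$ and the $A,D$ pair built from $P$ with opposite signs on the identity, so that the linear-in-$P$ terms cancel and $2PP^\top+2SS^\top=2qI$ plus the $2I$ from the two identities gives exactly $2(q+1)I$. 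The bulk of the proof is arranging signs and multiplicities so this identity holds on the nose.

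After the additive property is verified, the remaining steps are routine. Each of the constructed matrices is circulant (inherited from the Legendre core) and symmetric (by Turyn's construction, or by the well-known fact that a circulant matrix $X$ with $X=X^\top$ is exactly one whose defining sequence is symmetric, which Paley's core satisfies for $q\equiv1\pmod4$); hence they qualify as \emph{propus matrices}. Plugging them into $P$ produces a $\pm1$ matrix of order $2(q+1)$; its orthogonality $HH^\top=2(q+1)I$ follows from the additive property together with the block structure of $P$, and its symmetry follows from the symmetry of $A$, $B$, $D$ and the symmetric placement of blocks in $P$ (the $(1,4)$ and $(4,1)$ blocks are both $D$, the $(2,3)$ and $(3,2)$ blocks are $-A$, etc.).

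The main obstacle is the middle step: getting the signs and the pairing of Turyn's $P$ and $S$ into the four slots $A,B,B,D$ so that the additive identity comes out exactly $2(q+1)I$ — in particular ensuring the terms linear in the zero-diagonal matrix cancel and that the diagonal correction from replacing a zero diagonal by $\pm1$ contributes the extra $+I$ needed to turn $q$ into $q+1$. Everything else — circulancy, symmetry, and the final verification that $P$ is Hadamard — is bookkeeping on block matrices.
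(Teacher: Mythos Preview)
Your proposal is correct and, after the initial false start with $D=S$, converges on exactly the paper's assignment: $A=P+I$, $B=C=S$, $D=P-I$ (the paper writes Turyn's pair as $X,Y$ rather than $P,S$), so that the $\pm 2P$ terms cancel and $2PP^\top+2SS^\top+2I=2(q+1)I=4nI$. The paper's proof is a single sentence making this choice without the exploratory discussion or the block-matrix verification that the propus array is Hadamard and symmetric.
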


\begin{proof}
We note that for $q \equiv 1 \pmod{4}$, a prime power, Turyn (Theorem \ref{th:turyn} \cite{RT1972}) gave Williamson matrices, $X+I$, $X-I$, $Y$, $Y$, which are circulant and symmetric for orders $n = \frac12(q+1)$. Then choosing
$$ A= X+I, ~ B = C = Y,~ D=X-I$$
gives the required propus-Hadamard matrices.
\end{proof}

We now have propus-Hadamard matrices for orders $4n$ where $n$ is in 
\begin{multline*}
\{1,3,[5],7,9,[13],15,19,21,[25],27,31,37,[41],45,49,51,55,57,59,\\
[61],[63],
67,69,75,79,81,[85],87,89,91,97,99,105,111,115,117,119,121,\\
127,129,135,139,141,[145],147,157,159,169,175,177,[181],187,195,199. \}
\end{multline*}

The cases written in square brackets [5],[13],[25],[41],[61],[63],[85],[113],\newline [145],[181] arise when $q$ is a  prime power, however the Delsarte-Goethals-Seidel-Turyn construction means the required circulant matrices also exist for these prime powers (see Figure \ref{fig:P20-P52}). 

\begin{figure}[H]
  \centering
\subfloat[][P12 ($q=5;n=3$)]{\includegraphics[width=0.27\textwidth]{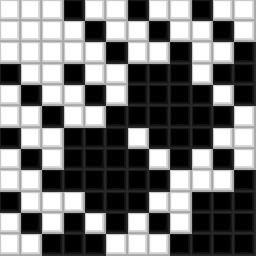}} \quad
%\subfloat[][P28 ($q=13;n=7$)]{\includegraphics[width=0.27\textwidth]{P28CL2}}\\
\subfloat[][P28 ($q=13;n=7$)]{\includegraphics[width=0.27\textwidth]{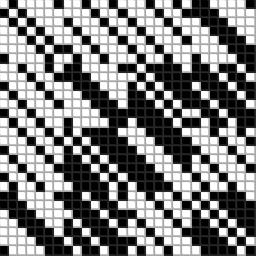}}\\
  \caption{Propus-Hadamard matrices for orders $4q$}
  \label{fig:P12-P28-P36-P60}
\end{figure}
  
\subsubsection{Propus matrices of small order and from $q$ prime power}

\begin{figure} [H]
  \centering
  \subfloat[][P20 ($q=3^2;n=5$)]{\includegraphics[width=0.27\textwidth]{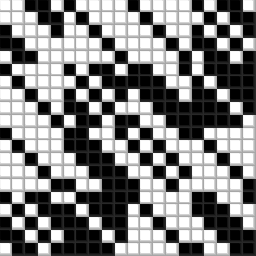}} \quad
  %\subfloat[][P52 ($q=5^2;n=13$)]{\includegraphics[width=0.27\textwidth]{P52CL6}}
  \subfloat[][P52 ($q=5^2;n=13$)]{\includegraphics[width=0.27\textwidth]{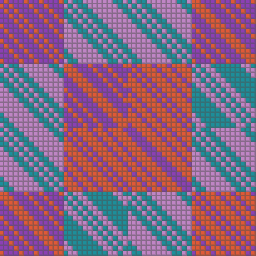}}
  \caption{Propus-Hadamard matrices for orders $4q$ $q$ a prime power.}
  \label{fig:P20-P52}
\end{figure}  

This family is considered to contain the two trivial propus-Hadamard matrices of orders 12 and 20 based on symmetric Paley cores 
$A=J$, $B = C = J-2I = QR$, $D=J=2I = QR$  for $n=3$, and $A = Q + I$, $B=C=J-2I$, $D = Q - I$ (constructed using Legendre symbols)  for n=5.  This special set can be continued with back-circulant matrices $C=B$ which allows the symmetry property of $A$ to be conserved.

\subsection{Propus-Hadamard matrices from $D$-optimal designs}
\begin{lemma}
Let $n \equiv 3 \pmod{4}$, be a prime, such that $D$-optimal designs,
constructed using two circulant matrices, one of which is symmetric, exist for
order $2n$. Then propus-Hadamard matrices exist for order $4n$.
\end{lemma}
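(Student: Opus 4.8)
The plan is to read the hypothesis as the statement that there exist two circulant $\{\pm1\}$ matrices $X$ and $Y$ of order $n$, one of them (say $X$) symmetric, satisfying the two-circulant $D$-optimal identity
\[
  XX^{\top}+YY^{\top}=(2n-2)I_n+2J_n ,
\]
and then to manufacture the remaining matrices of a propus array from Paley's Legendre core, so that the additive property~(\ref{additive}) comes out exactly.

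The first step uses Theorem~\ref{th:paley}: because $n\equiv 3\pmod 4$ is prime, the Legendre core $Q$ of order $n$ is circulant, \emph{skew}-symmetric, has zero diagonal, and satisfies $QQ^{\top}=nI_n-J_n$. Put $B=C=Q+I$. Then $B$ is a circulant $\{\pm1\}$ matrix, and since $Q^{\top}=-Q$ the cross terms cancel, so
\[
  BB^{\top}=QQ^{\top}+Q+Q^{\top}+I=(n+1)I_n-J_n .
\]
This is the one place where the precise hypotheses matter: $n\equiv 3\pmod 4$ (rather than $\equiv 1$) makes $Q$ skew so that $Q+Q^{\top}=0$, and $n$ prime makes $Q$ circulant.

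The second step is simply to set $A=X$ and $D=Y$. Then $A$ is symmetric and circulant, the four matrices $A,B,C,D$ are circulant and hence pairwise commute, and
\[
  AA^{\top}+2BB^{\top}+DD^{\top}=(2n-2)I_n+2J_n+2\bigl((n+1)I_n-J_n\bigr)=4nI_n ,
\]
so $A,B,C,D$ form a set of generalized-propus matrices. Feeding them into the $GP$ array produces a $\{\pm1\}$ matrix of order $4n$; because the blocks are circulant we have $RBR=B^{\top}$ and $RDR=D^{\top}$, so the resulting matrix is symmetric, and the Goethals--Seidel-type cancellation of the off-diagonal block products (which needs only commutativity of the blocks together with the additive identity just checked) makes it Hadamard. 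That is the required symmetric propus-Hadamard matrix of order $4n$.

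The only real content, as in the proof of Lemma~\ref{lem:basic}, is spotting the correct third and fourth matrix: the $D$-optimal identity comes with an unwanted $+2J_n$, and one needs a single $\{\pm1\}$ matrix whose Gram matrix is precisely $(n+1)I_n-J_n$ to absorb it once the propus array has doubled everything --- and for a prime $n\equiv 3\pmod 4$ the shifted Legendre core $Q+I$ is essentially the only natural candidate, which is exactly why the hypothesis restricts to such $n$. A secondary point to watch is that one must use $GP$ rather than $P$, since $B$ and $D$ are not symmetric; the reversal matrices built into $GP$ take care of the block transposes, so symmetry of the final matrix is not lost.
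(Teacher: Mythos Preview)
Your proof is correct and follows exactly the construction the paper uses: set $A=X$, $B=C=Q+I$, $D=Y$ and plug into the $GP$ array. You have supplied the verification of the additive property~(\ref{additive}) and the symmetry/Hadamard check that the paper leaves to the reader, but the underlying idea is identical.
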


Djokovi\'{c} and Kotsireas in \cite{AGJS1979,DDIK2012} give $D$-optimal designs, constructed using two circulant matrices, for 
$n \in \{3,5,7,9,13,15,19,21,23,25,27,31,33,\newline
37,41,43,45,49,51,55,57,59,61,63,69,73,75,77,79,85,87,91,93,97,103,\newline 113,121, 131,133,145,157,181,183\},$ $n < 200$. 
We are interested in those cases where the $D$-optimal design is constructed from two circulant matrices one of which must be symmetric.

Suppose $D$-optimal designs for orders $n \equiv 3 \pmod{4}$, a prime, 
are constructed using two circulant matrices, $X$ and $Y$. Suppose $X$ is symmetric. Let $Q+I$ be the Paley matrix of order $n$. Then choosing
$$ A= X,\quad B = C = Q+I, \quad D=Y,$$
to put in the array $GP$ gives the required propus-Hadamard matrices.

Hence we have propus-Hadamard matrices, constructed using $D$-optimal designs, 
for orders $4n$ where $n$ is in
\[\{3,7,19,31\}.\] 
The results for $n$ = 19 and 31  were given to us by Dragomir Djokovi\`{c}.
 
\begin{figure}[H]
  \centering
  \subfloat[][D6 ($n=3$)]{\includegraphics[width=0.27\textwidth]{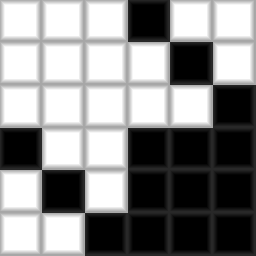}} \quad
  \subfloat[][GP12 ($n=3$)]{\includegraphics[width=0.27\textwidth]{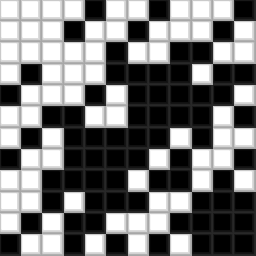}}\\
  \subfloat[][D14 ($n=7$)]{\includegraphics[width=0.27\textwidth]{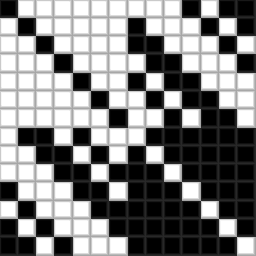}} \quad
  \subfloat[][GP28 ($n=7$)]{\includegraphics[width=0.27\textwidth]{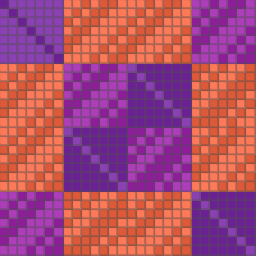}}\\
  \subfloat[][D38 ($n=19$)]{\includegraphics[width=0.27\textwidth]{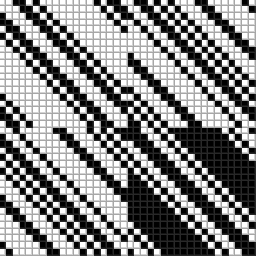}} \quad
  \subfloat[][GP76 ($n=19$)]{\includegraphics[width=0.27\textwidth]{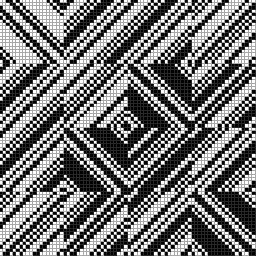}}\\
  \caption{D-optimal designs for orders $2n$ propus-Hadamard matrices for orders $4n$}
  \label{fig:D6-GP12-GP28-GP76}
\end{figure}

\subsection{A Variation of a Theorem of Miyamoto}

In Seberry and Yamada \cite{SY92} one of Miyamoto's  
results \cite{miyamoto1991} was reformulated so that symmetric Williamson-type 
matrices can be obtained.  The results given here are due to 
Miyamoto, Seberry and Yamada.

\begin{lemma}[Propus Variation] \label{le:1}
Let $U_i$, $V_j$, $i, j=1, 2, 3, 4$ be $(0, +1, -1)$ matrices of order $n$ 
which satisfy
\begin{enumerate}
\item[(i)] $U_i$, $U_j$, $i \ne j$ are pairwise amicable,
\item[(ii)] $V_i$, $V_j$, $i \ne j$ are pairwise amicable,
\item[(iii)] $U_i \pm V_i$, $(+1, -1)$ matrices, $i = 1, 2, 3, 4$,
\item[(iv)] the row sum of $U_1$ is 1, and the row sum of $U_j$, $i = 2,3,4$ 
is zero,
\item[(v)] $\sum_{i=1}^4 U_iU_i^T = (2n+1)I - 2J$, 
  $\sum_{i=1}^4 V_iV_i^T = (2n+1)I$.
\end{enumerate}

Let $S_1$, $S_2$, $S_3$, $S_4$ be four $(+1, -1)$-matrices of order $2n$ defined by
\begin{displaymath}
S_j = U_j \times \left[ \begin{array}{cc} 1 & 1 \\
      1 & 1 \end{array} \right] 
 + V_j \times \left[ \begin{array}{rr} 1 & -1 \\
      -1 & 1 \end{array} \right],
\end{displaymath}
where $S_2 = S_3$. 

Then there are 4  propus-Williamson type matrices of order $2n+1$. 
If $U_i$ and $V_i$ are symmetric, $i = 1, 2, 3, 4$ then the Williamson-type 
matrices are symmetric.
Hence there is a symmetric propus-type Hadamard matrix of order $4(2n+1)$.
\end{lemma}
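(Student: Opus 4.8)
The plan is to verify that the matrices $S_1,S_2,S_3,S_4$ are genuine propus-type Williamson matrices of order $2n$ in the sense needed, i.e.\ $(\pm1)$-matrices with $S_2=S_3$, pairwise amicable, with $S_1$ symmetric (or all symmetric in the symmetric case), and satisfying an additive identity $\sum_i S_iS_i^\top = cI$ for the appropriate constant $c$; then apply a known array (the propus array $P$ or a bordered variant) to produce the symmetric Hadamard matrix of order $4(2n+1)$. First I would record the two ``Kronecker-type'' building blocks $E=\left[\begin{smallmatrix}1&1\\1&1\end{smallmatrix}\right]$ and $F=\left[\begin{smallmatrix}1&-1\\-1&1\end{smallmatrix}\right]$ and note the elementary facts $E^\top=E$, $F^\top=F$, $E^2=2E$, $F^2=2F$, $EF=FE=0$. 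Condition (iii), that $U_i\pm V_i$ have entries in $\{+1,-1\}$, is exactly what forces every entry of $S_j=U_j\otimes E+V_j\otimes F$ to lie in $\{+1,-1\}$: in each $2\times2$ block the diagonal entries are $u\pm$ something and off-diagonal entries are $u\mp$ something where $u,v$ are corresponding entries of $U_j,V_j$, and since for each position at most one of $U_j,V_j$ is nonzero after the $\{0,\pm1\}$ bookkeeping — more precisely one checks $S_j$ block-by-block using (iii). That $S_2=S_3$ follows immediately from $U_2=U_3$, $V_2=V_3$ (which is the hidden hypothesis packaged in writing $S_2=S_3$).

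Next I would compute $\sum_{j=1}^4 S_jS_j^\top$. Expanding, $S_jS_j^\top=(U_j\otimes E+V_j\otimes F)(U_j^\top\otimes E+V_j^\top\otimes F)$, and because $EF=FE=0$ the cross terms $U_jV_j^\top\otimes EF$ and $V_jU_j^\top\otimes FE$ vanish, leaving $S_jS_j^\top=(U_jU_j^\top)\otimes E^2+(V_jV_j^\top)\otimes F^2=2(U_jU_j^\top)\otimes E+2(V_jV_j^\top)\otimes F$. Summing over $j$ and using (v),
\[
\sum_{j=1}^4 S_jS_j^\top
 = 2\bigl((2n+1)I_n-2J_n\bigr)\otimes E + 2\bigl((2n+1)I_n\bigr)\otimes F
 = 2(2n+1)I_n\otimes(E+F) - 4J_n\otimes E.
\]
Now $E+F=2I_2$, so $I_n\otimes(E+F)=2I_{2n}$, giving $\sum_j S_jS_j^\top = 4(2n+1)I_{2n}-4J_n\otimes E$. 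The stray term $4J_n\otimes E$ is handled by the border: the standard Miyamoto/Goethals--Seidel device is to pass from the order-$2n$ matrices $S_j$ to order-$2n+1$ matrices $A_j$ by bordering with a row and column of $+1$'s (using (iv), that the row sum of $U_1$ is $1$ and of $U_2,U_3,U_4$ is $0$, to get the right border sums), after which $\sum_j A_jA_j^\top=(4n+2)I_{2n+1}$, i.e.\ $A_1,A_2=A_3,A_4$ are Williamson-type of order $2n+1$ satisfying the additive property \eqref{additive} with the $n$ there equal to $2n+1$.

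For the amicability and symmetry bookkeeping: $S_iS_j^\top=2(U_iU_j^\top)\otimes E+2(V_iV_j^\top)\otimes F$ (again cross terms die), and since (i) gives $U_iU_j^\top=U_jU_i^\top$ and (ii) gives $V_iV_j^\top=V_jV_i^\top$, we get $S_iS_j^\top=S_jS_i^\top$, so the $S_j$ are pairwise amicable; bordering preserves amicability by the usual check. If all $U_i,V_i$ are symmetric then $S_j^\top=U_j^\top\otimes E^\top+V_j^\top\otimes F^\top=U_j\otimes E+V_j\otimes F=S_j$, so each $S_j$ (hence each bordered $A_j$) is symmetric; in the general case one still needs $A_1$ symmetric, which follows because $U_1,V_1$ with row sum conditions can be taken symmetric in Miyamoto's original data (this is the point Seberry--Yamada's reformulation exploits). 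Finally, feeding $A=A_1$, $B=C=A_2$, $D=A_4$ — all amicable, $A$ symmetric, satisfying \eqref{additive} — into the propus array $P$ yields a Hadamard matrix of order $4(2n+1)$, and symmetry of $P$ as a block matrix together with symmetry of the blocks makes it a symmetric propus-Hadamard matrix. The main obstacle I anticipate is the border computation: one must choose the bordering so that all five of the additive identity, the pairwise amicability, the $(\pm1)$-entry condition, and the symmetry of $A_1$ survive simultaneously, and this is exactly where hypotheses (iii) and (iv) are consumed; everything else is the routine $EF=0$ Kronecker algebra sketched above.
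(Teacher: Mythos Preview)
Your approach is essentially the paper's: the same Kronecker computation with $E,F$ (the paper writes it out block-by-block but it is the same identity), the same amicability check $S_iS_j^\top=S_jS_i^\top$ from (i) and (ii), the same additive identity $\sum S_iS_i^\top=4(2n+1)I_{2n}-4J_{2n}$, and then bordering to order $2n+1$ before plugging into the propus array.

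Two small corrections. First, the border on $S_1$ is \emph{not} a row and column of $+1$'s: the paper takes
\[
X_1=\begin{bmatrix}1&-e_{2n}\\ -e_{2n}^\top & S_1\end{bmatrix},\qquad
X_i=\begin{bmatrix}1&e_{2n}\\ e_{2n}^\top & S_i\end{bmatrix}\ (i=2,3,4),
\]
and the sign flip on $X_1$ is exactly what makes the off-diagonal border terms cancel, since by (iv) the row sum of $S_1$ is $2$ (giving a $-3e$ contribution) while each of $S_2,S_3,S_4$ has row sum $0$ (giving a $+e$ contribution). With all-$+1$ borders the cross terms do not cancel. You correctly flagged the bordering as the delicate step; this is the specific choice that works. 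Second, the bordered additive identity is $\sum_j X_jX_j^\top=4(2n+1)I_{2n+1}$, not $(4n+2)I_{2n+1}$; this is just an arithmetic slip.
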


\begin{proof} 
With $S_1$, $S_2$, $S_3$, $S_4$, as in the theorem enunciation
the row sum of $S_1 = 2$ and of $S_i = 0$, $i = 2, 3, 4$. Now define
\begin{displaymath}
X_1 = \left[ \begin{array}{cc} 1 & -e_{2n} \\
    -e_{2n}^T & S_1 \end{array} \right] 
\mbox{\quad and\quad} 
X_i = \left[ \begin{array}{cc} 1 & e_{2n} \\
    e_{2n}^T & S_i \end{array} \right],
\quad i = 2, 3, 4. 
\end{displaymath}
First note that since $U_i$, $U_j$, $i \ne j$ and $V_i$, $V_j$, 
$i \ne j$ are pairwise amicable,
\begin{align*}
S_i S_j^T &= \left(\! U_i \times 
  \left[\! \begin{array}{cc} 1 \!&\! 1 \\
             1 \!&\! 1 \end{array} \!\right] 
+ V_i \times \left[\! \begin{array}{rr} 1 \!&\!\! -1 \\ 
                  -1 \!&\!\! 1 \end{array} \!\right] \right) 
  \left(\! U_j^T \times\! \left[\! \begin{array}{cc} 1 \!&\! 1 \\
                      1 \!&\! 1 \end{array} \!\right] 
+ V_j^T \times\! \left[\! \begin{array}{rr} 1 \!&\!\! -1 \\
                 -1 \!&\!\! 1 \end{array} \!\right] \right)\\
&= U_iU_j^T \times \left[ \begin{array}{cc} 2 & 2 \\
                   2 & 2 \end{array}\right]
+ V_iV_j^T \times \left[\begin{array}{rr} 2 & -2 \\
                  -2 & 2 \end{array}\!\right]\\
&= S_jS_i^T.
\end{align*}
(Note this relationship is valid if and only if conditions (i) and
(ii) of the theorem are valid.)

\begin{align*}
\sum_{i=1}^4 S_i S_i^T
&= \sum_{i=1}^4 U_iU_i^T \times 
  \left[ \begin{array}{cc} 2 & 2 \\
               2 & 2 \end{array} \right] 
+ \sum_{i=1}^4 V_iV_i^T \times 
  \left[\begin{array}{rr} 2 & -2 \\
               -2 & 2 \end{array} \right] \\
&= 2 \left[ \begin{array}{cc} 2(2n+1)I -2J & -2J \\ 
                   -2J & 2(2n+1)I -2J\end{array} \right] \\
&= 4(2n + 1) I_{2n} - 4J_{2n} 
\end{align*}

Next we observe
\begin{displaymath}
X_1 X_i^T = \left[ \begin{array}{cc} 1 -2n & e_{2n} \\ 
                 e_{2n}^T & -J + S_1 S_i^T \end{array} \right] 
= X_i X_1^T \qquad i = 2, 3, 4,
\end{displaymath}
and
\begin{displaymath}
X_i X_j^T = \left[ \begin{array}{cc} 1+2n & e_{2n} \\ 
                  e_{2n}^T & J + S_i S_j^T \end{array} \right] 
= X_j X_i^T \qquad i \ne j,\ \ i,j = 2, 3, 4.
\end{displaymath}
Further

\begin{align*}
\sum_{i=1}^4 X_i X_i^T 
&= \left[\begin{array}{cc} 1+2n & -3e_{2n} \\
              -3e_{2n}^T & J + S_1 S_1^T \end{array} \right] 
+ \sum_{i=2}^4 \left[\begin{array}{cc} 1+2n & e_{2n} \\ 
                e_{2n}^T & J + S_i S_i^T \end{array}\right]\\
&= \left[\begin{array}{cc} 4(2n+1) & 0 \\
               0 & 4J + 4(2n+1)I - 4J \end{array}\right] .
\end{align*}
 
Thus we have shown that $X_1$, $X_2$, $X_3$, $X_4$ are pairwise amicable, symmetric Williamson 
type matrices of order $2n+1$, where $X_2 = X_3$. These can be used as in (ii) of Theorem using the additive property to obtain the required symmetric propus Hadamard matrix of order $(4(2n+1)$.
\end{proof}

Many powerful corollaries arose and new results were obtained by making 
suitable choices in the theorem.  We choose $X_1$, $X_2$, $X_3$, $X_4$ to ensure that the propus construction can be used to form symmetric Hadamard matrices of order $4(2n+1)$.

From Paley's theorem (Corollary \ref{th:paley}) for $p \equiv 3 \pmod{4}$ we use the backcirculant or type 1, symmetric matrices $QR$ and $R$ instead of $Q$ and $I$; whereas for $p \equiv 1 \pmod{4}$ we use the symmetric Paley core $Q$. If $p$ is a prime power $\equiv 3 \pmod{4}$ we set $U_1=I$, $U_2 = U_3 = QR$, $U_4 = 0$ of order $p$, and if $p$ is a prime power $\equiv 1 \pmod{4}$, we set $U_1=I$, $U_2 = U_3 = Q$, $U_4 = 0$ of order $p$. Hence $\sum_{k=1}^4 V_kV_k^{\top } = (q+2)I$.

From Turyn's result (Corollary \ref{th:turyn}) we set, for $p \equiv 1 \pmod{4}$ $U_1=P$, $U_2= U_3 = I$ and $U_4=S$, and for
$p \equiv 3 \pmod{4}$, $V_1=P$, $V_2= V_3 = R$ and $V_4=S$, so $\sum_{k=1}^4 U_kU_k^{\top } = (q+2)I$.

Hence we have:

\begin{corollary} \label{cor:impt}
Let $q \equiv 1 \pmod{4}$ be a prime power and $\frac12(q+1)$ be a prime power or the order of the core of a 
symmetric conference matrix (this happens for $q=89$).   Then there exist
symmetric Williamson type matrices of order $2q+1$ and a symmetric propus-type Hadamard matrix
of order $4(2q+1)$.  \end{corollary}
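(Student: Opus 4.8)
The plan is to apply the Propus Variation, Lemma \ref{le:1}, with $n=\frac12(q+1)$, feeding it the matrices singled out in the discussion preceding the corollary; the lemma then returns four propus-Williamson type matrices of order $2n+1$ and, via the additive property, a symmetric propus-type Hadamard matrix of order $4(2n+1)$.

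First I would fix the $U_i$. Because $\frac12(q+1)$ is a prime power or the order of the core of a symmetric conference matrix, there is a symmetric $(0,\pm1)$ matrix $Q$ of order $n=\frac12(q+1)$ with zero diagonal, entries $\pm1$ off the diagonal, zero row sums, and $QQ^{\top}=nI-J$: the Paley Legendre core (Theorem \ref{th:paley}) when $n\equiv1\pmod 4$ is a prime power, its symmetric companion $QR$ (with $R$ the back-diagonal permutation matrix) when $n\equiv3\pmod 4$ is a prime power, and the given conference core otherwise (this is the case $q=89$, $n=45$). Put $U_1=I$, $U_2=U_3=Q$, $U_4=0$, all of order $n$. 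These are symmetric and commute, hence pairwise amicable, giving (i); $U_1$ has row sum $1$ while $U_2,U_3,U_4$ have row sum $0$, giving (iv); and $\sum_i U_iU_i^{\top}=I+2(nI-J)=(2n+1)I-2J$, the first identity of (v).

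Next I would fix the $V_i$ using Turyn. Since $q\equiv1\pmod 4$ is a prime power, Theorem \ref{th:turyn} supplies two symmetric and (by the construction recalled in Lemma \ref{lem:basic}) circulant matrices $P,S$ of order $\frac12(q+1)=n$ with $PP^{\top}+SS^{\top}=qI$, where $P$ has zero diagonal and $\pm1$ entries off it and $S$ has entries $\pm1$. Put $V_1=P$, $V_2=V_3=I$, $V_4=S$ (replacing the middle pair by $R$ in the $n\equiv3\pmod 4$ case). Being symmetric circulant or permutation matrices, the $V_i$ commute and so are pairwise amicable, giving (ii); and $\sum_i V_iV_i^{\top}=PP^{\top}+2I+SS^{\top}=(q+2)I=(2n+1)I$, the second identity of (v). The point requiring care is (iii), that each $U_i\pm V_i$ be a $\pm1$ matrix: this works because the zero-patterns of $U_i$ and $V_i$ are complementary --- $I\pm P$ draws its diagonal $\pm1$'s from $I$ and its off-diagonal ones from $P$, $Q\pm I$ draws them the other way round, and $0\pm S=\pm S$ is already $\pm1$ --- and in the $n\equiv3\pmod 4$ case one must line up the zeros of $QR$, which sit on the anti-diagonal, with the ones of $R$, which is exactly why the middle pair becomes $R$ rather than $I$. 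This zero-pattern matching is the only spot where the back-circulant choice genuinely enters, and it is the main (and only mildly delicate) obstacle.

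With (i)--(v) verified and every $U_i,V_i$ symmetric, Lemma \ref{le:1} yields four symmetric propus-Williamson type matrices of order $2n+1$; substituting these into the propus array and invoking the additive property, exactly as in the proof of that lemma, produces the asserted symmetric propus-type Hadamard matrix of order $4(2n+1)$. The argument is short once the matrices are named: all the content lies in the existence of the symmetric core $Q$ of order $n$ and of Turyn's pair of order $n$ --- which is precisely what the two hypotheses, on $q$ and on $\frac12(q+1)$, guarantee --- and the only verification needing real attention is condition (iii).
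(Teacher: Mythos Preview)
Your proposal is correct and follows exactly the route the paper sketches in the two paragraphs preceding the corollary: take the Paley (or conference) core of order $n=\frac12(q+1)$ for the $U_i$ and Turyn's pair $P,S$ of order $n$ for the $V_i$, then feed these into Lemma~\ref{le:1}. You have in fact been more careful than the paper in splitting into the cases $n\equiv 1$ and $n\equiv 3\pmod 4$ and in verifying condition~(iii) by matching the zero-patterns, which the paper leaves implicit.
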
 

This gives the previously unresolved cases for 11 and 83.

\subsubsection{Three Equal}
The family given above includes two starting Hadamard matrices of orders 12 and 28  based on the skew Paley core $B =C = D = Q + I$ (constructed using Legendre symbols). This special set is finite because $12 = 3^2+1^2 +1^2+1^2$ and $28 = 5^2+1^2+1^2+1^2$ and these are the only orders for which a symmetric circulant $A$ can exist with $B=C=D$.

\begin{figure}[H] 
  \centering
\subfloat[][P12skew ($n=3$)]{\includegraphics[width=0.25\textwidth]{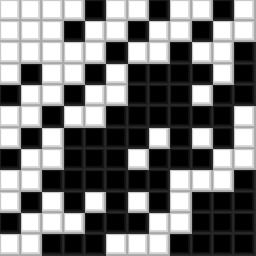}} \quad
\subfloat[][P28skew ($n=7$)]{\includegraphics[width=0.25\textwidth]{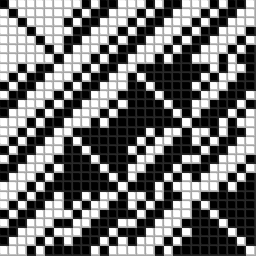}}
  \caption{Propus-Hadamard matrices using $D$-optimal designs}
  \label{fig:P12skew-and-P28skew}
\end{figure}  

\section{Propus-Hadamard \hfill matrices\hfill from \hfill conference
\newline matrices: even order matrices}

A powerful method to construct propus-Hadamard matrices for $n$ even is using conference matrices.

\begin{lemma} Suppose $M$ is a conference matrix of order $n \equiv 2 \pmod{4}$. Then $MM^\top = M^\top M =(n-1)I,$ where $I$ is the identity matrix and $M^\top =M$. Then using $A=M+I,~B=C=M-I,~D=M+I$ gives a propus-Hadamard matrix of order $4n$.
\end{lemma}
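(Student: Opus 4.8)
The plan is to check directly that the prescribed matrices are propus-type matrices and then to invoke the (routine) verification that the propus array $P$ is orthogonal.

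\emph{Step 1: the matrices are $\pm1$ and propus-type.} Since $M$ is a conference matrix it has zero diagonal and off-diagonal entries $\pm1$, so $A=D=M+I$ and $B=C=M-I$ are genuine $\pm1$ matrices of order $n$. As $M^\top=M$, each of $A,B,D$ is symmetric; and since $A$, $B$, $D$ are all polynomials in the single matrix $M$, they commute pairwise, whence $XY^\top=XY=YX=YX^\top$ for any two of them. Thus $A,B,B,D$ are pairwise amicable with $A^\top=A$, i.e. propus-type matrices in the sense of the definitions above (indeed they are also generalized-propus matrices).

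\emph{Step 2: the additive property.} Using $M^\top=M$ and $MM^\top=(n-1)I$ one has $(M\pm I)(M\pm I)^\top=M^2\pm2M+I=nI\pm2M$. Hence
\[
AA^\top+2BB^\top+DD^\top=2(nI+2M)+2(nI-2M)=4nI_n ,
\]
which is exactly the additive property~\eqref{additive}.

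\emph{Step 3: plugging into $P$.} Every block of $P$ is symmetric and the block layout of $P$ is itself symmetric, so $P^\top=P$. Computing $PP^\top=P^2$ block by block: each diagonal block equals $AA^\top+2BB^\top+DD^\top=4nI_n$ by Step~2, while each of the six off-diagonal blocks is a sum of four terms that cancel in pairs, the cancellations using only the amicability relations $XY^\top=YX^\top$ together with the signs built into the propus array (for instance the $(1,2)$ block is $AB^\top+BD^\top-BA^\top-DB^\top=0$). Therefore $PP^\top=4nI_{4n}$, and since $P$ is symmetric it is a symmetric Hadamard matrix of order $4n$ assembled from propus-type matrices, i.e. a symmetric propus-Hadamard matrix.

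The only delicate point is the bookkeeping of signs in the twelve block products making up $PP^\top$, i.e. confirming that the pattern of $\pm$ signs in $P$ really forces every off-diagonal block to vanish; but this is precisely the standard computation that the propus array is orthogonal whenever the matrices put into it are pairwise amicable and satisfy~\eqref{additive}, so it poses no genuine obstacle. The substantive content of the lemma is simply that a symmetric conference matrix $M$ of order $n\equiv2\pmod4$ supplies such a quadruple, with the extra coincidences $A=D$ and $B=C$.
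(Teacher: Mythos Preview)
Your verification is correct and complete. The paper in fact states this lemma without any proof at all, so your Steps 1--3 supply exactly the routine check the authors left implicit: that $M\pm I$ are symmetric, pairwise amicable $\pm1$ matrices satisfying the additive property, and that plugging them into the propus array $P$ yields a symmetric Hadamard matrix of order $4n$. One small presentational remark: in Step~2 you collapse $AA^\top+DD^\top$ into $2(nI+2M)$ without comment, which is fine since $A=D=M+I$, but it might help the reader to say so explicitly before writing the displayed equation.
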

 We use the conference matrix orders from \cite{NBJS2014} and so have propus-Hadamard matrices of orders $4n$ where $n \in $
 $$\{6,10,14,18,26,30,38,42,46,50,54,62,74,82,90,98\}.$$

The conference matrices in Figure \ref{fig:C10-P20-C26-P52} are made two circulant matrices $A$   and $B$ of order $n$ where both $A$ and $B$ are symmetric.

Then using the matrices $A+I$, $B = C$ and $D= A-I$ in $P$ gives the required construction. 

\begin{figure}[H] 
  \centering
  \subfloat[][CP20 ($n=3$)]{\includegraphics[width=0.27\textwidth]{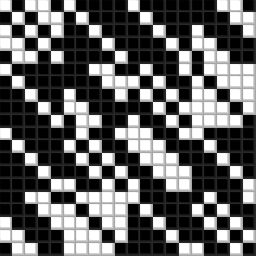}}\\
  \subfloat[][C26 ($n=13$)]{\includegraphics[width=0.27\textwidth]{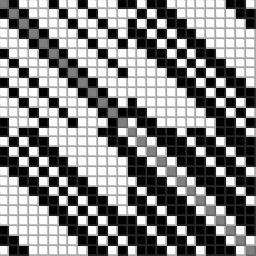}} \quad
  \subfloat[][CP52 ($n=13$)]{\includegraphics[width=0.27\textwidth]{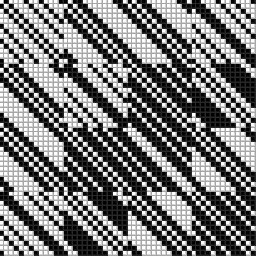}}\\
  \caption{Conference matrices for orders $2n$ using two circulants: propus-Hadamard matrices for orders $4n$}
  \label{fig:C10-P20-C26-P52}
\end{figure}
  
The conference matrices in Figure \ref{fig:C10-GP20-C26-GP52} are made from two circulant matrices $A$ and $B$ of order $n$ where both $A$ and $B$ are symmetric. However here we use $A+I$, 
$BR = CR$ and $D= A-I$ in $P$ to obtain the required construction. 

\begin{figure}[H] 
  \centering
  %\subfloat[][C10G ($n=5$)]{\includegraphics[width=0.27\textwidth]{C10G}} \quad
  %\subfloat[][CP20G ($n=5$)]{\includegraphics[width=0.27\textwidth]{CP20G}}\\
  \subfloat[][C26G ($n=13$)]{\includegraphics[width=0.27\textwidth]{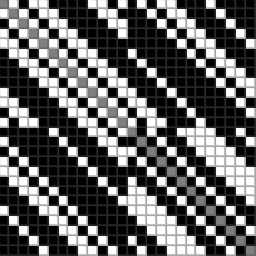}} \quad
  \subfloat[][CP52G ($n=13$)]{\includegraphics[width=0.27\textwidth]{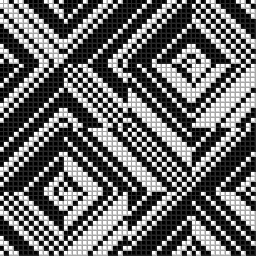}}\\
  \caption{Conference matrices for orders $2n$ using two circulant and back-circulants: propus-Hadamard matrices for orders $4n$}
  \label{fig:C10-GP20-C26-GP52}
\end{figure}  
There is another variant of this family which uses the symmetric Paley cores $A = Q + I$, $D=Q - I$ (constructed using Legendre symbols) and one circulant matrix of maximal determinant $B=C=Y$.

\subsection{Propus-Hadamard matrices for $n$ even}

Figure \ref{fig:P16-P32} gives visualizations (images/pictures) of propus-Hadamard matrices orders  16, 32. These have even $n$. 

\begin{figure}[H] \label{fig7}
  \centering
   \quad
  \subfloat[][P16 ($n=4$)]{\includegraphics[width=0.27\textwidth]{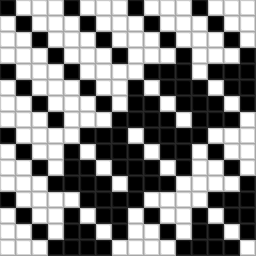}} \quad
{\tiny }  
  %\subfloat[][P32 ($n=8$)]{\includegraphics[width=0.27\textwidth]{P32CL4}}
  \subfloat[][P32 ($n=8$)]{\includegraphics[width=0.27\textwidth]{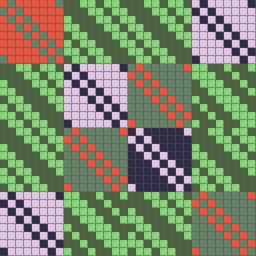}}
  \caption{Matrices P16 and P32}
  \label{fig:P16-P32}
\end{figure}

 \section{Conclusion and Future Work}
 Using the results of Lemma \ref{lem:basic} and Corollary \ref{cor:impt} 
 and the symmetric propus-Hadamard matrices of Di Matteo,  Djokovi\'{c}, and  Kotsireas given in \cite{MDK2015}, we see that the unresolved cases for symmetric propus-Hadamard matrices for orders $4n$, $n < 200$ odd, are where $n \in $
 \begin{multline*}
 \{17,23,29,33,35,39,47,53,65,71,73,77,93,95,97,99,\\
 101,103,107,109,113,125,131,133,137,143,149,151,153,155,\\
  161,163,165,167,171,173,179,183,185,189,191,193,197.\}
 \end{multline*} 
 
There are many constructions and variations of the propus theme to be explored in future research.
Visualizing the propus construction gives aesthetically pleasing examples of propus-Hadamard matrices.
The visualization also makes the construction method clearer. There is the possibility that these visualizations may be used for quilting.

\end{document}